\newtheorem*{rep@theorem}{\rep@title}
\newcommand{\newreptheorem}[2]{%
\newenvironment{rep#1}[1]{%
 \def\rep@title{#2 \ref{##1}}%
 \begin{rep@theorem}}%
 {\end{rep@theorem}}}
\newtheorem{thm}{Theorem}[section]
\newtheorem{prop}[thm]{Proposition} 
\newtheorem{lem}[thm]{Lemma}
\newtheorem{cor}[thm]{Corollary}
\theoremstyle{definition}
\newtheorem{exmpl}[thm]{Example}
\newtheorem{?}[thm]{Question}
\newtheorem*{blankdfn}{Definition}
\theoremstyle{remark}
\newtheorem{rmk}[thm]{Remark}
\newcommand{\HOM}{\mathbb{H}\text{om}}
\newcommand{\FR}{\mathfrak}
\newcommand{\ds}{\displaystyle}
\newcommand{\tql}{\textquotedblleft}
\newcommand{\tqr}{\textquotedblright}
\newcommand{\noin}{\noindent}
\begin{document}

\author{Scott Atkinson}
\title{Minimal Faces and Schur's Lemma for Embeddings into $R^\mathcal{U}$}
\address{Vanderbilt University, Nashville, TN, USA}
\email{scott.a.atkinson@vanderbilt.edu}

\begin{abstract}
In the context of N. Brown's $\HOM(N,R^\mathcal{U})$, we establish that given $\pi: N \rightarrow R^\mathcal{U}$, the dimension of the minimal face containing $[\pi]$ is one less than the dimension of the center of the relative commutant of $\pi$.  
We also show the \tql convex independence\tqr of extreme points in the sense that the convex hull of $n$ extreme points is an $n$-vertex simplex. Along the way, we establish a version of Schur's Lemma for embeddings of II$_1$-factors.
\end{abstract}
\maketitle

\section{Introduction and Main Results}

In \cite{topdyn}, N. Brown exhibits a convex structure on the space of unitary equivalence classes of embeddings of a II$_1$-factor in an ultrapower of the separable hyperfinite II$_1$-factor.  Since its appearance in 2011, the convex structure in \cite{topdyn} has received a fair amount of attention in the literature--see \cite{capfri}, \cite{brocap}, \cite{caprad}, \cite{chir}, \cite{paunescu1}, \cite{paunescu2}, \cite{caplup}, \cite{thuan}, and \cite{saa}. Part of the appeal of Brown's work is that it links convex geometric concepts with operator algebraic ones.  The purpose of the present paper is to deepen this connection.

Let $N$ be a separable II$_1$-factor, and let $R$ denote the separable hyperfinite II$_1$-factor.  We denote by $\HOM(N,R^\mathcal{U})$ the collection of unitary equivalence classes of $*$-homomorphisms $N \rightarrow R^\mathcal{U}$ where $\mathcal{U}$ is a free ultrafilter on the natural numbers. We let $[\pi]$ denote the equivalence class of the $*$-homomorphism $\pi: N \rightarrow R^\mathcal{U}$.  The work in \cite{topdyn} and \cite{capfri} demonstrates that $\HOM(N,R^\mathcal{U})$ can be considered as a closed bounded convex subset of a Banach space.

One of the main results in \cite{topdyn} is the following characterization of extreme points.

\begin{thm}[\cite{topdyn}]\label{brownchar}
The equivalence class $[\pi] \in \HOM(N,R^\mathcal{U})$ is extreme if and only if $\pi(N)' \cap R^\mathcal{U}$ is a factor.
\end{thm}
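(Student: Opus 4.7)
\noin \emph{Plan.} The strategy is to establish a bijection between convex decompositions of $[\pi]$ and projections in the relative commutant $\mathcal{M} := \pi(N)' \cap R^\mathcal{U}$, and then characterize triviality of decompositions via the center $Z(\mathcal{M})$. Unwinding Brown's convex structure, every decomposition $[\pi] = t[\pi_1] + (1-t)[\pi_2]$ arises, after replacing $\pi$ by a unitary conjugate, from some $p \in \mathcal{M}$ with $\tau(p) = t$, in such a way that $[\pi_1] = [\phi_+(p\pi(\cdot)p)]$ and $[\pi_2] = [\phi_-((1-p)\pi(\cdot)(1-p))]$ for chosen $*$-isomorphisms $\phi_\pm$ identifying the two corners of $R^\mathcal{U}$ with $R^\mathcal{U}$; conversely every such projection in $\mathcal{M}$ yields a decomposition.

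\noin \emph{Factor implies extreme.} Suppose $\mathcal{M}$ is a II$_1$ factor and $p \in \mathcal{M}$ produces a decomposition with $\tau(p) = t$. For $t = k/n$ rational, use that $\mathcal{M}$, being a II$_1$ factor, contains a hyperfinite subfactor (built by iterated $M_2$ inclusions) and hence a system of matrix units $\{e_{ij}\}_{i,j=1}^n \subset \mathcal{M}$ with $p$ equivalent in $\mathcal{M}$ to $e_{11} + \cdots + e_{kk}$. Replacing $p$ by this equivalent projection, the matrix units exhibit $\pi$ as the diagonal ampliation of $e_{11}\pi(\cdot)e_{11}$ under an isomorphism $R^\mathcal{U} \cong R^\mathcal{U} \otimes M_n$, yielding $[\pi_1] = [\pi]$ and, symmetrically, $[\pi_2] = [\pi]$. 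An approximation/continuity argument in the convex structure extends the conclusion to arbitrary $t \in (0,1)$.

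\noin \emph{Non-factor implies not extreme.} Given a nontrivial central projection $p \in Z(\mathcal{M})$, form the corresponding decomposition. Centrality forces $p\mathcal{M}p = p\mathcal{M}$ and $(1-p)\mathcal{M}(1-p) = (1-p)\mathcal{M}$ to be the two complementary direct summands of $\mathcal{M}$, which identify with the relative commutants of $\pi_1$ and $\pi_2$ respectively. I would show $[\pi_1] \neq [\pi_2]$: an equivalence would yield a $*$-isomorphism of these two summands intertwining the images of $\pi$ in the two corners, from which (after a trace-balancing amplification if $\tau(p) \neq 1/2$) one extracts a partial isometry in $\mathcal{M}$ (or an amplification thereof) realizing $p \sim 1-p$, contradicting the fact that equivalent central projections in any von Neumann algebra must be equal. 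The main obstacle is making this extraction precise when $Z(\mathcal{M})$ is diffuse: the two central summands can then be abstractly isomorphic as tracial von Neumann algebras, so the required rigidity must come from the embedding $\pi$ itself rather than from $\mathcal{M}$ alone, which is the technical heart of this direction.
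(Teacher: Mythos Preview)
The paper does not spell out a proof of Theorem~\ref{brownchar} beyond the remark that it ``follows quickly from Proposition~\ref{cutdownprop}.'' Your framework via cutdowns is exactly right and matches parts (3a)--(3b) of that proposition. The tool you are missing, and which closes both of your gaps, is part~(4): for projections $p,q$ of equal trace in $\mathcal{M}:=\pi(N)'\cap R^\mathcal{U}$, one has $[\pi_p]=[\pi_q]$ if and only if $p$ and $q$ are Murray--von Neumann equivalent \emph{in $\mathcal{M}$}.

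For ``factor $\Rightarrow$ extreme'' your matrix-unit construction is fine for rational $t$, but the continuity extension to irrational $t$ is not justified as written; you would need to argue that $[\pi_p]$ depends continuously on $p$ in $\|\cdot\|_2$, which is true but is extra work. With part~(4) no approximation is needed. The commutant $\mathcal{M}$ is always diffuse (after identifying $\pi$ with $\sigma(1\otimes\pi)$ it contains $\sigma(R^\mathcal{U}\otimes 1)$), so if it is a factor it is II$_1$ and any two projections of equal trace are equivalent. Rescaling as in Propositions~\ref{cutdownprop}(2) and~\ref{rescale} lets one compare $\sigma(1\otimes p)$ with $\sigma(Q\otimes 1)$ for $\tau(Q)=\tau(p)$, and part~(4) gives $[\pi_p]=[\pi]$ directly for every $t$.

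For ``non-factor $\Rightarrow$ not extreme'' your instinct to derive $z\sim 1-z$ for a nontrivial central $z$ is correct, and the gap you flag is precisely the absence of part~(4). Your worry about diffuse $Z(\mathcal{M})$ is a red herring: no structural comparison of the summands $z\mathcal{M}$ and $z^\perp\mathcal{M}$ is required, and no rigidity specific to $\pi$ beyond Proposition~\ref{cutdownprop} is invoked. After rescaling so that $\sigma(1\otimes z)$ and $\sigma(Q\otimes z^\perp)$ have equal trace, an equality $[\pi_z]=[\pi_{z^\perp}]$ would force, by part~(4), the central projection $\sigma(1\otimes z)$ to be Murray--von Neumann equivalent in the relative commutant to a projection lying under $\sigma(1\otimes z^\perp)$, which is impossible since a central projection is equivalent only to itself.
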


\noindent This is an important result because it gives a convex geometric perspective on the following well-known open question attributed to S. Popa: given any II$_1$-factor (that embeds into $R^\mathcal{U}$), does there exist an embedding $\pi: N \rightarrow R^\mathcal{U}$ such that its relative commutant is a factor?  In view of Theorem \ref{brownchar}, this question asks if the convex set $\HOM(N,R^\mathcal{U})$ is always guaranteed to have extreme points.

We offer a generalization of Theorem \ref{brownchar} by considering minimal faces in \linebreak $\HOM(N,R^\mathcal{U})$ and the algebraic information lying therein.  In particular, we will show that given an embedding $\pi: N \rightarrow R^\mathcal{U}$, the dimension of the minimal face in $\HOM(N,R^\mathcal{U})$ containing $[\pi]$ is directly related to the dimension of the center of the relative commutant of $\pi$. The latter usage of the word \tql dimension\tqr has the usual algebraic meaning; the former usage is made precise in the following definition.

\begin{blankdfn}
Given $[\pi] \in \HOM(N,R^\mathcal{U})$, let $F_{[\pi]}$ denote the minimal face in \linebreak $\HOM(N,R^\mathcal{U})$ containing $[\pi]$.  $F_{[\pi]}$ is obtained by intersecting all faces in $\HOM(N,R^\mathcal{U})$ that contain $[\pi]$.  Let $\dim(F_{[\pi]})$ be the dimension of the minimal face, given by the smallest $n$ such that $F_{[\pi]}$ affinely embeds into $\mathbb{R}^n$; if there is no such $n \in \mathbb{N}$, then we say $\dim(F_{[\pi]}) = \infty$.  As a convention, $\dim(F_{[\pi]}) = 0$ if and only if $F_{[\pi]} = \left\{[\pi]\right\}$ is a singleton--that is, $[\pi]$ is extreme.  We will use $\text{dim}(\cdot)$ to mean both the dimension of a minimal face, and the dimension of an algebra--the context will make the usage clear.
\end{blankdfn}


The main result of this paper is the following theorem.

\begin{thm}\label{simplexthm}
Let the embedding $\pi: N \rightarrow R^\mathcal{U}$ be given.
\begin{enumerate}

	\item\label{equation} $\dim(F_{[\pi]}) +1 = \dim(\mathcal{Z}(\pi(N)'\cap R^\mathcal{U}))$. Here $\mathcal{Z}(\pi(N)'\cap R^\mathcal{U})$ denotes the center of $\pi(N)'\cap R^\mathcal{U}$.

	\item\label{sim} If $\dim(\mathcal{Z}(\pi(N)'\cap R^\mathcal{U})) = n <\infty$ then $F_{[\pi]}$ is an $n$-vertex simplex.
	
	\item\label{iso} If $\varphi \in t_1[\pi_1] + \cdots + t_n[\pi_n]$ where $0< t_j < 1, \sum t_j = 1$, and $[\pi_j]$ is an extreme point for every $1\leq j \leq n$, then \[\varphi(N)' \cap R^\mathcal{U} \cong \oplus_{j=1}^n \pi_j(N)'\cap R^\mathcal{U}.\]	
	
\end{enumerate}
\end{thm}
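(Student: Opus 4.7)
The plan is as follows. The three parts are linked, with part (\ref{iso}) serving as the algebraic engine. My strategy rests on two pillars: (i) the dictionary between convex combinations in $\HOM(N,R^\mathcal{U})$ and orthogonal projection decompositions $1=p_1+\cdots+p_n$ inside $R^\mathcal{U}$ with $p_j\in\varphi(N)'\cap R^\mathcal{U}$ and $\tau(p_j)=t_j$; and (ii) the Schur's Lemma for embeddings advertised in the abstract, which I would establish first. That lemma should state: if $[\pi_1]\neq[\pi_2]$ are distinct extreme points, then any $x\in R^\mathcal{U}$ with $x\pi_1(n)=\pi_2(n)x$ for all $n\in N$ must vanish. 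My expected route is to take the polar decomposition of $x$ in $R^\mathcal{U}$, producing a partial isometry that intertwines cut-downs of $\pi_1$ and $\pi_2$ by source and range projections lying in their relative commutants; by Theorem~\ref{brownchar} those relative commutants are factors, so these projections are forced to be $0$ or $1$, and a non-zero intertwiner then implements $[\pi_1]=[\pi_2]$, contradicting distinctness.

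For part (\ref{iso}), I would choose representatives so that $\varphi=\bigoplus_j\pi_j$ arises from orthogonal projections $p_j\in R^\mathcal{U}$ summing to $1$ with $\tau(p_j)=t_j$, $p_j\in\varphi(N)'\cap R^\mathcal{U}$, and $\pi_j(\cdot)=p_j\varphi(\cdot)p_j$ viewed inside $p_jR^\mathcal{U}p_j\cong R^\mathcal{U}$. For any $y\in\varphi(N)'\cap R^\mathcal{U}$ and $i\neq j$, the block $p_iyp_j$ satisfies $\pi_i(n)(p_iyp_j)=(p_iyp_j)\pi_j(n)$ and so intertwines $\pi_i$ and $\pi_j$; Schur's Lemma forces $p_iyp_j=0$. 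Hence each $p_j$ is central in $\varphi(N)'\cap R^\mathcal{U}$, yielding
\[
\varphi(N)'\cap R^\mathcal{U} \;=\; \bigoplus_{j=1}^n p_j(\varphi(N)'\cap R^\mathcal{U})p_j \;=\; \bigoplus_{j=1}^n \pi_j(N)'\cap p_jR^\mathcal{U}p_j \;\cong\; \bigoplus_{j=1}^n \pi_j(N)'\cap R^\mathcal{U}.
\]

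For parts (\ref{equation}) and (\ref{sim}), assume first $n=\dim\mathcal{Z}(\pi(N)'\cap R^\mathcal{U})<\infty$, let $z_1,\ldots,z_n$ be the minimal central projections of the relative commutant, and compress $\pi$ to embeddings $\pi_j:N\to z_jR^\mathcal{U}z_j\cong R^\mathcal{U}$. Then $\pi_j(N)'\cap z_jR^\mathcal{U}z_j=z_j(\pi(N)'\cap R^\mathcal{U})$ is a factor by minimality of $z_j$, so Theorem~\ref{brownchar} makes each $[\pi_j]$ extreme and $[\pi]=\sum\tau(z_j)[\pi_j]$. Schur's Lemma again makes the $[\pi_j]$ pairwise distinct (equality of two classes would produce a non-zero intertwiner straddling $z_i$ and $z_j$, forcing them to fuse inside $\mathcal{Z}(\pi(N)'\cap R^\mathcal{U})$). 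This places $[\pi]$ in the relative interior of an $(n-1)$-dimensional convex hull, so $\dim F_{[\pi]}\leq n-1$. Conversely, any convex decomposition of $[\pi]$ lifts to a projection system in $\pi(N)'\cap R^\mathcal{U}$ whose maximal refinement into extreme summands is exactly the minimal-central-projection decomposition; together with the convex independence of extreme points (the companion main result of the paper) this both gives $\dim F_{[\pi]}\geq n-1$ and identifies $F_{[\pi]}$ as an $n$-vertex simplex, proving (\ref{sim}). The infinite-center case follows by cutting down to arbitrary finite $*$-subalgebras of the center to build faces of unbounded finite dimension, forcing $\dim F_{[\pi]}=\infty$.

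The main obstacle is the Schur's Lemma step. The classical representation-theoretic argument relies on decomposing non-trivial invariant subspaces, which must here be replaced by polar decomposition of intertwiners inside the ultrapower and a careful application of the factor property of the relative commutants of extreme embeddings; the partial isometries produced need not come from unitaries in $R^\mathcal{U}$, so one must work at the level of the equivalence relation defining $\HOM(N,R^\mathcal{U})$. Once Schur is in hand, the remaining content of the theorem is a relatively clean bookkeeping consequence of it together with Theorem~\ref{brownchar}.
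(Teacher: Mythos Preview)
Your Schur's Lemma argument contains a real error. You claim that since the relative commutants $\pi_i(N)'\cap R^\mathcal{U}$ are factors (Theorem~\ref{brownchar}), the source and range projections $v^*v$, $vv^*$ of the polar-decomposition partial isometry are ``forced to be $0$ or $1$.'' This is false: these relative commutants always contain a copy of $R$, hence are diffuse type~II$_1$ algebras with projections of every trace in $[0,1]$. The paper's argument instead observes that the partial isometry $v$ yields an equivalence of \emph{cutdowns}, $[\pi_p]=[\rho_q]$ with $p=vv^*$ and $q=v^*v$, and then uses the fact---immediate from Proposition~\ref{cutdownprop}(3)(a) together with extremality---that every cutdown of an extreme class equals the class itself, giving $[\pi]=[\pi_p]=[\rho_q]=[\rho]$.

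Two further issues deserve attention. For part~(\ref{iso}) you apply Schur directly to the block $p_iyp_j$, which intertwines maps into corners $p_iR^\mathcal{U}p_i$ and $p_jR^\mathcal{U}p_j$ of generally unequal trace; the Schur statement you sketched concerns unital maps into $R^\mathcal{U}$, and transporting the off-diagonal intertwiner through two different standard isomorphisms is not automatic. The paper only records an equal-trace corner version (Corollary~\ref{scaletwine}) and handles unequal $t$ by subdividing $p$ and $p^\perp$ into subprojections of common trace $1/K$, applying the corner-Schur to those blocks, and bounding the remainder in $\|\cdot\|_2$ by $4/\sqrt{K}\to 0$. For part~(\ref{sim}) you invoke ``the convex independence of extreme points,'' i.e.\ Corollary~\ref{linind}, but in the paper that corollary is \emph{deduced from} Theorem~\ref{simplexthm}, so citing it here is circular; relatedly, your inequality $\dim F_{[\pi]}\le n-1$ presupposes that $\mathrm{conv}\{[\pi_j]\}$ is already a face, which is precisely what is at stake. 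The paper sidesteps this by proving (\ref{sim}) first and without Schur: it identifies $F_{[\pi]}$ with the cutdowns by projections of a fixed small trace (Lemma~\ref{minface2}) and exhibits the explicit affine bijection $[\pi_p]\mapsto(\tau(pz_1),\dots,\tau(pz_n))$ onto the standard simplex.
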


\noindent To see that Theorem \ref{simplexthm} subsumes Theorem \ref{brownchar}, consider part \eqref{equation} in the case where \[\dim(F_{[\pi]}) +1 = \dim(\mathcal{Z}(\pi(N)'\cap R^\mathcal{U})) = 1.\]   

As mentioned above, the question of existence of extreme points in $\HOM(N,R^\mathcal{U})$ is a well-known open question that has recently received some attention in the literature. In \cite{caplup}, Capraro and Lupini observe that if $\HOM(N,R^\mathcal{U})$ embeds into either a dual Banach space or a strictly convex Banach space then the question has an affirmative answer. They go further to say that in \cite{chir}, Chirvasitu exhibits evidence supporting the possibility that $\HOM(N,R^\mathcal{U})$ does embed into a dual Banach space. To add to the list of reductions of this open question, Theorem \ref{simplexthm} immediately yields a convex geometric proof of the following corollary.

\begin{cor}\label{fidicenter}
If there is an embedding $\rho: N \rightarrow R^\mathcal{U}$ such that $\mathcal{Z}(\rho(N)' \cap R^\mathcal{U})$ is finite dimensional, then there is an embedding $\pi: N \rightarrow R^\mathcal{U}$ such that $\pi(N)'\cap R^\mathcal{U}$ is a factor. 
\end{cor}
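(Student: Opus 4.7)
The plan is a short reduction using Theorem \ref{simplexthm}(2), Theorem \ref{brownchar}, and a standard convex-geometric fact. Suppose $\rho: N \rightarrow R^\mathcal{U}$ is an embedding with $\dim(\mathcal{Z}(\rho(N)' \cap R^\mathcal{U})) = n < \infty$. By Theorem \ref{simplexthm}(2), the minimal face $F_{[\rho]}$ containing $[\rho]$ is an $n$-vertex simplex; since the center of any unital algebra has dimension at least $1$, we have $n \geq 1$, so $F_{[\rho]}$ has at least one vertex.

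Next, I would invoke the elementary fact that if $F$ is a face of a convex set $C$, then every extreme point of $F$ is also an extreme point of $C$: any decomposition of a vertex of $F$ as a nontrivial convex combination of two points of $C$ must, by the face property, take place inside $F$, where the vertex is already extreme. Applying this to $F = F_{[\rho]}$ and $C = \HOM(N,R^\mathcal{U})$, each vertex $[\pi]$ of the simplex $F_{[\rho]}$ is an extreme point of $\HOM(N,R^\mathcal{U})$. Theorem \ref{brownchar} then immediately yields that $\pi(N)' \cap R^\mathcal{U}$ is a factor, producing the desired embedding.

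There is really no obstacle once Theorem \ref{simplexthm} is in hand. The case $n = 1$ recovers the statement that $\rho$ itself is already extreme (consistent with Theorem \ref{brownchar}), and for $n > 1$ one simply passes from $\rho$ to any of the $n$ vertices of the simplex $F_{[\rho]}$. All the genuine content of the corollary is contained in the main theorem; this argument is purely formal.
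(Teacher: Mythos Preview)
Your proposal is correct and is exactly the ``convex geometric proof'' the paper alludes to: the paper simply states that Theorem~\ref{simplexthm} immediately yields Corollary~\ref{fidicenter}, and your argument---pass to a vertex of the simplex $F_{[\rho]}$, use that extreme points of a face are extreme in the ambient convex set, and invoke Theorem~\ref{brownchar}---is precisely the intended unpacking of that remark.
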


\noin In fact, Corollary \ref{fidicenter} can be proved directly in the following weaker form: if there is an embedding $\rho: N \rightarrow R^\mathcal{U}$ such that $\mathcal{Z}(\rho(N)' \cap R^\mathcal{U})$ has a nonzero minimal central projection, then there is an embedding $\pi: N \rightarrow R^\mathcal{U}$ such that $\pi(N)'\cap R^\mathcal{U}$ is a factor. 


The following corollary indicates the \tql convex independence\tqr of extreme points.

\begin{cor}\label{linind}
The convex hull of $n$ extreme points in $\HOM(N,R^\mathcal{U})$ is always an $n$-vertex simplex and a face. 
\end{cor}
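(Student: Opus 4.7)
The plan is to deduce the corollary from all three parts of Theorem \ref{simplexthm} by working at the centroid of the given extreme points. Let $[\pi_1], \ldots, [\pi_n]$ be extreme, and choose a representative $\varphi$ of the strict convex combination $\frac{1}{n}[\pi_1] + \cdots + \frac{1}{n}[\pi_n]$. My first step is to invoke part \eqref{iso}, which gives
\[\varphi(N)' \cap R^\mathcal{U} \cong \bigoplus_{j=1}^n \pi_j(N)' \cap R^\mathcal{U}.\]
Since each $[\pi_j]$ is extreme, Theorem \ref{brownchar} forces each summand to be a factor; hence $\mathcal{Z}(\varphi(N)' \cap R^\mathcal{U})$ has dimension exactly $n$.

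Next, parts \eqref{equation} and \eqref{sim} of Theorem \ref{simplexthm} tell me that $F_{[\varphi]}$ has affine dimension $n-1$ and is an $n$-vertex simplex. The remaining task is to identify its vertices with the $[\pi_j]$. Since $[\varphi]$ is a strict convex combination of $[\pi_1], \ldots, [\pi_n]$ and $F_{[\varphi]}$ is a face of $\HOM(N, R^\mathcal{U})$, each $[\pi_j]$ must lie in $F_{[\varphi]}$ by the defining property of a face. As each $[\pi_j]$ is extreme in the ambient convex set, it is also extreme in the subset $F_{[\varphi]}$; and an $n$-vertex simplex has precisely $n$ extreme points, namely its vertices. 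Therefore $\{[\pi_1], \ldots, [\pi_n]\}$ must be the full vertex set, and the convex hull of these points equals $F_{[\varphi]}$ itself, which is both an $n$-vertex simplex and a face of $\HOM(N, R^\mathcal{U})$.

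The main subtlety, if there is one, is checking that a representative $\varphi$ as above genuinely exists and that the appeal to part \eqref{iso} is legitimate in this form; but this should be automatic from the formulation of Theorem \ref{simplexthm}\eqref{iso}. Everything else reduces to the standard convex-geometric fact that a face containing a strict convex combination must contain each constituent, together with the observation that extremality is inherited by subfaces.
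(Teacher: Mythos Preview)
Your argument is correct and is exactly the derivation the paper leaves implicit: the paper states Corollary~\ref{linind} immediately after Theorem~\ref{simplexthm} without a separate proof, and your use of the centroid together with parts \eqref{iso}, \eqref{equation}, and \eqref{sim} is the intended route. The only point worth making explicit is that the $n$ extreme points must be assumed pairwise distinct (as the paper clearly intends, and as the proof of part~\eqref{iso} in \S5 requires), since otherwise both the corollary and your appeal to part~\eqref{iso} would fail.
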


\noindent  For example, the convex hull of four extreme points cannot be a square--it must be a tetrahedron.

\begin{exmpl}
In Corollaries 6.10 and 6.11 of \cite{topdyn}, Brown exhibits II$_1$-factors with the property that for such a II$_1$-factor $N$, $\HOM(N,R^\mathcal{U})$ has infinitely many extreme points with a cluster point.  So for such a II$_1$-factor $N$ and any $n \in \mathbb{N}$, by Theorem \ref{simplexthm}, there is a face in $\HOM(N,R^\mathcal{U})$ taking the form of an $n$-vertex simplex. In fact, these can be nested.  
\end{exmpl}

\begin{rmk}
Simplices have the property that the convex hull of any finite number of extreme points is a simplex and a face.  Although $\HOM(N,R^\mathcal{U})$ is rarely a simplex (as it is rarely compact, see Theorem 4.7 of \cite{topdyn}), Theorem \ref{simplexthm} tells us that in the cases where extreme points exist, $\HOM(N,R^\mathcal{U})$ shares this property.
\end{rmk}

\begin{rmk}
At no point do we use that $N$ is a II$_1$-factor; so all of the results in this paper apply to $\HOM(\FR{A},R^\mathcal{U})$ for any separable unital $C^*$-algebra $\FR{A}$.  
Though it would require even more technical notation, it is reasonable to expect that these results extend further to $\HOM(\FR{A},M^\mathcal{U})$ for any separable unital $C^*$-algebra $\FR{A}$ and any separable McDuff II$_1$-factor $M$.  See \cite{saa} for more details.  These results may even extend to  P\u{a}unescu's convex structure on the space of sofic representations of a given sofic group appearing in \cite{paunescu1} and \cite{paunescu2}.  For the sake of simplicity, the arguments presented here will be limited to the context of \cite{topdyn}.  
\end{rmk}

A secondary result of this paper is the following theorem which can be thought of as a type of Schur's lemma for this context of embeddings of II$_1$-factors.

\begin{thm}\label{hardlemma}
Let $[\pi],[\rho] \in \HOM(N,R^\mathcal{U})$ be extreme points.  If there is a nonzero intertwiner $x \in R^\mathcal{U}$ such that $\pi(a)x=x\rho(a)$ for every $a \in N$, then $[\pi]=[\rho]$.
\end{thm}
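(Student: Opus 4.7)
The plan is to work with the ampliated representation $\tilde\pi = \pi \oplus \rho : N \to M_2(R^\mathcal{U})$ and study the relative commutant
\[
M := \tilde\pi(N)' \cap M_2(R^\mathcal{U}).
\]
Intertwiners between $\pi$ and $\rho$ live as off-diagonal entries of $M$, so the hypothesis that a nonzero intertwiner exists should force $M$ to be a factor. Once this is established, the diagonal projections $e = \bigl(\begin{smallmatrix} 1 & 0 \\ 0 & 0 \end{smallmatrix}\bigr)$ and $f = \bigl(\begin{smallmatrix} 0 & 0 \\ 0 & 1 \end{smallmatrix}\bigr)$, which have equal trace $1/2$ in $M_2(R^\mathcal{U})$, will be equivalent in $M$; unwrapping the witnessing partial isometry will produce a unitary in $R^\mathcal{U}$ conjugating $\pi$ to $\rho$.

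More concretely, I would first record that
\[
M = \begin{pmatrix} \pi(N)' \cap R^\mathcal{U} & \mathcal{I} \\ \mathcal{I}^* & \rho(N)' \cap R^\mathcal{U} \end{pmatrix},
\]
where $\mathcal{I} := \{y \in R^\mathcal{U} : \pi(a)y = y\rho(a) \text{ for all } a \in N\}$. By Theorem \ref{brownchar} the two diagonal corners $eMe \cong \pi(N)'\cap R^\mathcal{U}$ and $fMf \cong \rho(N)'\cap R^\mathcal{U}$ are factors. Next I would analyze $\mathcal{Z}(M)$: for any projection $z \in \mathcal{Z}(M)$, the compression $ze$ lies in $\mathcal{Z}(eMe)$, and factoriality forces $ze \in \{0, e\}$; similarly $zf \in \{0, f\}$. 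Hence every central projection of $M$ lies in $\{0, e, f, 1\}$, so $\mathcal{Z}(M) \subseteq \mathbb{C}e + \mathbb{C}f$. Either $M$ is a factor, or $e$ is itself central in $M$; but the latter would force $M$ to be block-diagonal in the above matrix form, contradicting the existence of the nonzero intertwiner $x \in \mathcal{I}$. Therefore $M$ is a factor.

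Since $M$ is a von Neumann subalgebra of the finite factor $M_2(R^\mathcal{U})$, the restriction of the normalized trace is a faithful normal tracial state on $M$, making $M$ a finite factor with $\tau(e) = \tau(f) = 1/2$. Hence $e \sim f$ in $M$. A partial isometry $w \in M$ with $w^*w = e$ and $ww^* = f$ must have the form $w = \bigl(\begin{smallmatrix} 0 & 0 \\ y^* & 0 \end{smallmatrix}\bigr)$ for some $y \in R^\mathcal{U}$, and the conditions $w^*w = e$, $ww^* = f$, $w \in M$ translate exactly to $y$ being a unitary element of $\mathcal{I}$. Then $y^* \pi(a) y = \rho(a)$ for all $a \in N$, giving $[\pi] = [\rho]$.

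The substantive steps are the center analysis that promotes $M$ from a general finite algebra to a factor and the comparison-of-projections argument in the finite factor $M$; I expect the first to be the main conceptual point, while the second is standard once factoriality and equality of traces are in hand.
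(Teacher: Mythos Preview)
Your proof is correct and takes a genuinely different route from the paper's. The paper works directly with the polar decomposition $x = v|x|$: the partial isometry $v$ also intertwines, and with $p = vv^*$, $q = v^*v$, the liftable homomorphism $\theta_p \circ \text{Ad}(v) \circ \theta_q^{-1}$ is, by Proposition~\ref{3.1.2}, inner on $\rho_q(N)$, giving $[\pi_p] = [\rho_q]$; extremality then yields $[\pi] = [\pi_p] = [\rho_q] = [\rho]$. Your approach instead passes to the direct sum inside $M_2(R^\mathcal{U})$, shows the joint commutant is a finite factor via the center analysis, and invokes comparison of equal-trace projections to produce a unitary intertwiner directly. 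The paper's argument is shorter once the cutdown machinery and Proposition~\ref{3.1.2} are in hand, but it leans on the special liftability structure of $R^\mathcal{U}$; your argument is self-contained, makes the Schur's-lemma analogy more transparent, and uses nothing about $R^\mathcal{U}$ beyond its being a II$_1$-factor, so it essentially delivers the generalization recorded in Remark~\ref{generalschur}.
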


\noin In the convex geometric context of $\HOM(N,R^\mathcal{U})$, extreme points are the irreducible/simple objects. This theorem is saying that there are no nonzero intertwiners between two inequivalent (representatives of) irreducibles.  From this perspective, it can be seen why one would consider Theorem \ref{hardlemma} as a sort of Schur's Lemma in the context of this article. It should be mentioned that Theorem \ref{hardlemma} can be stated in more general terms (see Remark \ref{generalschur}).


\subsection*{Acknowledgments} The author would like to thank Nate Brown, Jesse Peterson, David Sherman, and Stuart White for providing valuable comments and suggestions.  Gratitude is also due to the University of Virginia and the Hausdorff Research Institute for Mathematics for financial support during the development and writing of this article.

\section{Survey of $\HOM(N,R^\mathcal{U})$}\label{homnru}



We start with a technical proposition that is fundamental in the structure and analysis of $\HOM(N,R^\mathcal{U})$.  The following proposition appears as Proposition 3.1.2 of \cite{topdyn}.

\begin{prop}[\cite{topdyn}]\label{3.1.2}
Let $p,q \in R^\mathcal{U}$ be projections of the same trace, $M \subset pR^\mathcal{U}p$ be a \emph{separable} von Neumann subalgebra and $\chi: pR^\mathcal{U}p \rightarrow qR^\mathcal{U}q$ be a unital $*$-homomorphism.  Assume there exist projections $p_i,q_i \in R, i \in \mathbb{N}$ with $\tau(p_i) = \tau(q_i) = \tau(p)$ for every $i \in \mathbb{N}$ such that $(p_i)_\mathcal{U} = p$ and $(q_i)_\mathcal{U} = q$, and there exist $*$-homomorphisms $\chi_i: p_iRp_i \rightarrow q_iRq_i$ such that $\chi = (\chi_i)_\mathcal{U}$.  Then there exists a partial isometry $v \in R^\mathcal{U}$ with $v^*v = p$ and $vv^*=q$ such that $\chi(x) = vxv^*$ for every $x \in M$.
\end{prop}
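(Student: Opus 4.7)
The plan is to approximate $M$ by finite-dimensional $*$-subalgebras at each index $k$, implement each $\chi_k$ on these pieces by a partial isometry in $R$ via the classical uniqueness of finite-dimensional embeddings into a II$_1$ factor, and then diagonalize along $\mathcal{U}$ to produce the desired $v \in R^\mathcal{U}$.

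First I would fix a $\|\cdot\|_2$-dense sequence $\{x_m\}_{m\geq 1}$ in $M$ and for each $m$ pick a representing sequence $(x_m^{(k)})_k$ with $x_m^{(k)} \in p_kRp_k$ (by replacing $x_m^{(k)}$ with $p_kx_m^{(k)}p_k$, which does not change the ultralimit since $pxp = x$ for $x \in M$). Because $p_kRp_k$ is hyperfinite, for each pair $(n,k)$ I would then choose a finite-dimensional unital $*$-subalgebra $A_{n,k} \subset p_kRp_k$ together with elements $y_j^{(n,k)} \in A_{n,k}$, $1 \leq j \leq n$, such that $\|y_j^{(n,k)} - x_j^{(k)}\|_2 < 1/n$.

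The heart of the argument is the classical fact that two $*$-homomorphisms from a finite-dimensional $*$-algebra into a II$_1$ factor are spatially conjugate by a partial isometry in that factor, provided their traces agree on every projection. Since $\chi$ is unital we have $\chi_k(p_k) \to q_k$ along $\mathcal{U}$, so we may as well assume each $\chi_k$ is unital; it is then trace-preserving as a unital $*$-homomorphism between corners of equal normalized trace in the factor $R$. Applying the classical fact to the inclusion $A_{n,k} \hookrightarrow R$ and to $\chi_k|_{A_{n,k}} : A_{n,k} \to q_kRq_k \subset R$ produces a partial isometry $v_{n,k} \in R$ with $v_{n,k}^*v_{n,k} = p_k$, $v_{n,k}v_{n,k}^* = q_k$, and $\chi_k(a) = v_{n,k}av_{n,k}^*$ for every $a \in A_{n,k}$.

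Setting $v := (v_{k,k})_\mathcal{U} \in R^\mathcal{U}$, the projection conditions $v^*v = p$ and $vv^* = q$ are immediate. For fixed $m$ and each $k \geq m$, using that $\chi_k$ and conjugation by $v_{k,k}$ are $\|\cdot\|_2$-isometric on $p_kRp_k$, together with $\chi_k(y_m^{(k,k)}) = v_{k,k}y_m^{(k,k)}v_{k,k}^*$, I expect
\[
\|\chi_k(x_m^{(k)}) - v_{k,k}x_m^{(k)}v_{k,k}^*\|_2 \leq 2\|x_m^{(k)} - y_m^{(k,k)}\|_2 < 2/k,
\]
so passing to the ultraproduct yields $\chi(x_m) = vx_mv^*$ for every $m$; $\|\cdot\|_2$-density and $\|\cdot\|_2$-continuity of both $\chi$ and $x \mapsto vxv^*$ then give $\chi(x) = vxv^*$ for all $x \in M$. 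The main obstacle is the finite-dimensional implementation step: the underlying uniqueness-up-to-partial-isometry result is well known, but one must be careful to arrange that the resulting $v_{n,k}$ have precisely the prescribed source and range projections $p_k$ and $q_k$ in $R$.
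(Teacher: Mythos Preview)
The paper does not prove this proposition; it is quoted verbatim from \cite{topdyn} (Brown, Proposition~3.1.2) and used as a black box. So there is no ``paper's own proof'' to compare against here.

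That said, your argument is correct and is essentially the standard proof (and the one in \cite{topdyn}): approximate coordinatewise by finite-dimensional subalgebras using hyperfiniteness of $p_kRp_k$, implement $\chi_k$ on each such piece by a partial isometry via the classical uniqueness of trace-matching finite-dimensional embeddings into a II$_1$ factor, and diagonalize along $\mathcal{U}$. Two small points you flag are indeed routine. First, the reduction to unital $\chi_k$: since $(\chi_k(p_k))_\mathcal{U}=q$, one may add to $\chi_k$ any $*$-homomorphism $p_kRp_k\to (q_k-\chi_k(p_k))R(q_k-\chi_k(p_k))$ without changing the ultraproduct, obtaining unital (hence trace-preserving) lifts. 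Second, arranging $v_{n,k}^*v_{n,k}=p_k$ and $v_{n,k}v_{n,k}^*=q_k$: decompose $A_{n,k}$ into matrix summands, choose for each summand a partial isometry $w$ in $R$ between a minimal projection and its image under $\chi_k$ (same trace, hence equivalent in $R$), and set $v=\sum_j \chi_k(e_{j1})\,w\,e_{1j}$; summing over the summands gives the desired $v_{n,k}$.
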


\noin We will say that such a homomorphism $\chi$ lifts to coordinatewise homomorphisms or is liftable. Let $\sigma: R\otimes R \rightarrow R$ be an isomorphism, and to allow an abuse of notation, let $\sigma: (R\otimes R)^\mathcal{U} \rightarrow R^\mathcal{U}$ also denote the induced isomorphism between ultrapowers.  Using Proposition \ref{3.1.2}, it can be shown that given any $\pi: N \rightarrow R^\mathcal{U}, \pi$ is unitarily equivalent to $\sigma(1\otimes \pi)$ where $\sigma(1\otimes \pi)(x) = \sigma(1\otimes \pi(x))$.

Before exhibiting a convex structure on $\HOM(N,R^\mathcal{U})$, Brown establishes in \cite{topdyn} what it means to have a convex structure with out an ambient linear space.  Brown gives five axioms in Definition 2.1 of \cite{topdyn} that should be expected of a bounded convex subset of a linear space.  In \cite{capfri}, Capraro and Fritz show that closed bounded convex subsets of Banach spaces are characterized by these axioms defining a convex-like structure.  

In order to define convex combinations, we use the notion of a standard isomorphism. Let $p \in R^\mathcal{U}$ be a projection such that $p = (p_i)_\mathcal{U}$ where $p_i$ is a projection in $R$ with $\tau(p_i) = \tau(p)$ for each $i \in \mathbb{N}$.  An isomorphism $\theta_p: pR^\mathcal{U}p \rightarrow R^\mathcal{U}$ is called a \emph{standard isomorphism} if it lifts to coordinatewise isomorphisms $p_iRp_i \rightarrow R$.  We can define convex combinations as follows. Given $[\pi_1], \dots, [\pi_n] \in \HOM(N,R^\mathcal{U})$ and $0 \leq t_1,\dots, t_n \leq 1$ with $\sum t_i = 1$, we define 
\[t_1[\pi_1] + \cdots + t_n[\pi_n] := [\theta_{p_1}^{-1} \circ \pi_1 + \cdots + \theta_{p_n}^{-1}\circ \pi_n]\]
where for every $1 \leq i \leq n$ $\tau(p_i) = t_i$ and $\theta_{p_i}$ is a standard isomorphism. Thanks to Proposition \ref{3.1.2}, this operation is well-defined.  Proposition \ref{3.1.2} can also be used to show that 
\[t_1[\pi_1] + \cdots t_n[\pi_n] = [\sigma(p_1\otimes \pi_1) + \cdots + \sigma(p_n \otimes \pi_n)]\] where $p_1,\dots, p_n$ are projections with traces $t_1,\dots,t_n$ respectively and $\sigma(p_k \otimes \pi_k) (x) = \sigma  (p_k \otimes \pi_k(x))$ (see Example 4.5 of \cite{topdyn}). Under this definition, $\HOM(N,R^\mathcal{U})$ satisfies the axioms for a convex-like structure.  

Given $\pi: N\rightarrow R^\mathcal{U}$ and a projection $p \in \pi(N)'\cap R^\mathcal{U}$, we define the \emph{cutdown of $\pi$ by $p$} to be the map $\pi_p$ given by $\pi_p(x) = \theta_p(p\pi(x))$ where $\theta_p$ is a standard isomorphism.  It can be shown that $[\pi_p]$ is independent of the choice of the standard isomorphism. We record some important facts about cutdowns in the following proposition.

\begin{prop}[\cite{topdyn}]\label{cutdownprop}
Let $\pi: N\rightarrow R^\mathcal{U}$ be given.

\begin{enumerate}
\item Let $p \in \pi(N)'\cap R^\mathcal{U}$ be a projection. If $u \in R^\mathcal{U}$ is a unitary, then \[[\pi_p] = [(\text{Ad}(u) \circ \pi)_{upu^*}].\]

\item For any projection $p \in R^\mathcal{U}$, \[[\pi] = [\sigma(1\otimes \pi)_{\sigma(p\otimes 1)}].\]

\item \begin{enumerate}

	\item Given any $p \in \pi(N)'\cap R^\mathcal{U}$, $[\pi] = \tau(p)[\pi_p] + \tau(p^\perp)[\pi_{p^\perp}]$.
	
	\item If $[\pi] = t[\rho_1] + (1-t)[\rho_2]$ then there is a projection $p \in \pi(N)'\cap R^\mathcal{U}$ with trace $t$ such that $[\rho_1] = [\pi_p]$ and $[\rho_2] = [\pi_{p^\perp}]$.

\end{enumerate}

\item Let $p,q \in \pi(N)'\cap R^\mathcal{U}$ be projections with the same trace.  Then the following are equivalent.
	\begin{enumerate}
		\item $[\pi_p] = [\pi_q]$
		\item $p$ and $q$ are Murray-von Neumann equivalent in $\pi(N)'\cap R^\mathcal{U}$.
	\end{enumerate}
\end{enumerate}
\end{prop}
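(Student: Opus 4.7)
The plan is to handle each of the four parts by producing liftable $*$-homomorphisms that, via Proposition \ref{3.1.2}, can be implemented by (partial) isometries in $R^\mathcal{U}$. Throughout I work directly from the definitions of cutdown, standard isomorphism, and convex combination. For (1), the projection $upu^*$ lies in $(\text{Ad}(u)\circ\pi)(N)'\cap R^\mathcal{U}$, so the cutdown on the right is well defined; choosing standard isomorphisms $\theta_p$ and $\theta_{upu^*}$, a direct calculation gives
\[
(\text{Ad}(u)\circ\pi)_{upu^*}(x) = \bigl(\theta_{upu^*}\circ\text{Ad}(u)\circ\theta_p^{-1}\bigr)(\pi_p(x)).
\]
Since $u$ lifts to a sequence of unitaries in $R$ and both standard isomorphisms are liftable, the composite automorphism of $R^\mathcal{U}$ is liftable; Proposition \ref{3.1.2} applied with $M = \pi_p(N)$ produces a unitary implementing it on $\pi_p(N)$. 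Part (2) is the same argument: $\sigma(p\otimes 1)$ commutes with $\sigma(1\otimes\pi)(N)$, and the map $\theta_{\sigma(p\otimes 1)}\circ\sigma(p\otimes\cdot)$ restricted to $\pi(N)$ is a liftable homomorphism conjugating $\pi$ to $\sigma(1\otimes\pi)_{\sigma(p\otimes 1)}$.

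For part (3)(a), I would exploit the freedom in the definition of convex combinations to pick the projections $p$ and $p^\perp$ themselves (with the correct traces), using standard isomorphisms matching those defining the cutdowns; the definition then collapses to $[x\mapsto p\pi(x)+p^\perp\pi(x)] = [\pi]$. For (3)(b), realize the convex combination via $\varphi := \sigma(q\otimes\rho_1)+\sigma(q^\perp\otimes\rho_2)$ with $\tau(q)=t$, and fix a unitary $u$ with $\pi = \text{Ad}(u)\circ\varphi$. The projection $\sigma(q\otimes 1)$ lies in $\varphi(N)'\cap R^\mathcal{U}$, so $p := u\sigma(q\otimes 1)u^*$ lies in $\pi(N)'\cap R^\mathcal{U}$ with trace $t$. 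Applying (1) and (2) then yields
\[
[\pi_p] = [\varphi_{\sigma(q\otimes 1)}] = [\sigma(1\otimes\rho_1)_{\sigma(q\otimes 1)}] = [\rho_1],
\]
and symmetrically $[\pi_{p^\perp}]=[\rho_2]$.

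Part (4) is the most delicate. For $(b)\Rightarrow(a)$, a partial isometry $v\in\pi(N)'\cap R^\mathcal{U}$ with $v^*v=p$, $vv^*=q$ lifts to partial isometries $v_i\in R$ with $v_i^*v_i=p_i$ and $v_iv_i^*=q_i$ (after adjusting the chosen lifts of $p,q$ within their trace classes), so $\text{Ad}(v)\colon pR^\mathcal{U}p\to qR^\mathcal{U}q$ is liftable; composing with standard isomorphisms and applying Proposition \ref{3.1.2} to $\pi_p(N)$ yields a unitary conjugating $\pi_p$ to $\pi_q$. For $(a)\Rightarrow(b)$, given a unitary $w$ with $\text{Ad}(w)\circ\pi_p = \pi_q$, define $\chi := \theta_q^{-1}\circ\text{Ad}(w)\circ\theta_p\colon pR^\mathcal{U}p\to qR^\mathcal{U}q$; this is liftable and satisfies $\chi(p\pi(x)) = q\pi(x)$. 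Applying Proposition \ref{3.1.2} with $M = p\pi(N)$ produces a partial isometry $v'$ with $(v')^*v'=p$, $v'(v')^*=q$, and $v'\pi(x)(v')^* = q\pi(x)$. A short manipulation using $v'p = v' = qv'$ and the fact that $p$ commutes with $\pi(N)$ then shows $v'\pi(x) = \pi(x)v'$, so $v'$ witnesses $p\sim q$ in $\pi(N)'\cap R^\mathcal{U}$. The main obstacle is the liftability bookkeeping in (4): in $(b)\Rightarrow(a)$, arranging partial-isometry lifts of $v$ compatible with prescribed lifts of $p$ and $q$, and in $(a)\Rightarrow(b)$, the closing commutation argument showing that the partial isometry produced by Proposition \ref{3.1.2} actually commutes with all of $\pi(N)$ rather than merely intertwining the compressions $p\pi(x)$ and $q\pi(x)$.
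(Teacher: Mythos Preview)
The paper does not supply its own proof of this proposition; it is quoted from \cite{topdyn} as background (``We record some important facts about cutdowns in the following proposition''), so there is no in-paper argument to compare against. Your proof is correct and is precisely in the spirit of the technique from \cite{topdyn} that this paper relies on throughout: reduce each assertion to exhibiting a liftable unital $*$-homomorphism between corners of $R^\mathcal{U}$ and then invoke Proposition~\ref{3.1.2} on a separable subalgebra.

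Two small confirmations on the points you flagged as obstacles in part~(4). For $(a)\Rightarrow(b)$, the closing commutation step does go through cleanly: from $v'(p\pi(a))(v')^* = q\pi(a)$ and $(v')^*v' = p$, right-multiply by $v'$ to get $v'p\pi(a) = q\pi(a)v'$; since $v'p = v'$ and $qv' = v'$ this is exactly $v'\pi(a) = \pi(a)v'$, so $v'\in\pi(N)'\cap R^\mathcal{U}$ witnesses the Murray--von Neumann equivalence. For $(b)\Rightarrow(a)$, the liftability bookkeeping is harmless: any partial isometry in $R^\mathcal{U}$ lifts to a sequence of partial isometries in $R$ (polar-decompose any lift), and one simply takes the resulting source and range projections as the coordinatewise lifts of $p$ and $q$ when building $\theta_p$ and $\theta_q$; the well-definedness of $[\pi_p]$ makes this choice immaterial.
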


\noindent Theorem \ref{brownchar} follows quickly from Proposition \ref{cutdownprop}.

\section{Proof of Part \eqref{sim} of Theorem \ref{simplexthm}}

Our strategy to prove Theorem \ref{simplexthm} is to first prove part \eqref{sim} and then part \eqref{iso}.  Part \eqref{equation} will follow quickly from parts \eqref{sim} and \eqref{iso}.  From now on, fix a separable II$_1$-factor $N$ and $\pi: N \rightarrow R^\mathcal{U}$. The following proposition is a scaled version of part (2) of Proposition \ref{cutdownprop}.

\begin{prop}\label{rescale}
Let $p$ be a projection in $\pi(N)'\cap R^\mathcal{U}$. Then for any nonzero projection $Q \in R^\mathcal{U}$, we have \[[\pi_p] =[\sigma(1\otimes \pi)_{\sigma(1\otimes p)}] = [\sigma(1\otimes \pi)_{\sigma(Q \otimes p)}].\]
\end{prop}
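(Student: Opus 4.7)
The plan is to prove both equalities at once by showing that for every nonzero projection $Q \in R^\mathcal{U}$ one has $[\sigma(1\otimes \pi)_{\sigma(Q\otimes p)}] = [\pi_p]$. Specialising to $Q = 1$ recovers the first equality $[\pi_p] = [\sigma(1\otimes \pi)_{\sigma(1\otimes p)}]$, and combining the equalities for $Q$ and for $Q = 1$ gives the second.

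First I would choose representing sequences $Q = (Q_i)_\mathcal{U}$ and $p = (p_i)_\mathcal{U}$ with $Q_i, p_i$ nonzero projections in $R$, and unwind the definition of the cutdown. The corner being cut down is
\[
\sigma(Q\otimes p)R^\mathcal{U}\sigma(Q\otimes p) \;=\; \sigma\bigl((Q\otimes p)(R\otimes R)^\mathcal{U}(Q\otimes p)\bigr),
\]
which lifts coordinatewise to $\sigma_0(Q_iRQ_i \otimes p_iRp_i)$. Because $R$ is a factor, each corner $Q_iRQ_i$ is isomorphic to $R$, so one can pick $*$-isomorphisms $\alpha_i : Q_iRQ_i \to R$ and $\beta_i : p_iRp_i \to R$ and assemble them into a \emph{particular} standard isomorphism
\[
\theta \;=\; \sigma \circ (\alpha \otimes \beta) \circ \sigma^{-1} : \sigma(Q\otimes p)R^\mathcal{U}\sigma(Q\otimes p) \longrightarrow R^\mathcal{U},
\]
where $\alpha = (\alpha_i)_\mathcal{U}$ and $\beta = (\beta_i)_\mathcal{U}$. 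Since the cutdown class is independent of the standard isomorphism used, it is enough to compute $\sigma(1\otimes \pi)_{\sigma(Q\otimes p)}$ via this convenient $\theta$.

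The key calculation is then
\[
\theta\bigl(\sigma(Q\otimes p)\,\sigma(1\otimes \pi(x))\bigr) \;=\; \theta\bigl(\sigma(Q\otimes p\pi(x))\bigr) \;=\; \sigma\bigl(\alpha(Q)\otimes \beta(p\pi(x))\bigr) \;=\; \sigma\bigl(1 \otimes \pi_p(x)\bigr),
\]
using $\alpha(Q) = 1$ (since each $\alpha_i$ is unital on $Q_iRQ_i$) and recognising $\beta(p\pi(\cdot))$ as the cutdown $\pi_p$ computed with the standard isomorphism $\beta : pR^\mathcal{U}p \to R^\mathcal{U}$. Hence $\sigma(1\otimes \pi)_{\sigma(Q\otimes p)} = \sigma(1\otimes \pi_p)$, and invoking the fact recorded in Section \ref{homnru} that $\sigma(1\otimes \eta)$ is unitarily equivalent to $\eta$ for every embedding $\eta$, we conclude $[\sigma(1\otimes \pi)_{\sigma(Q\otimes p)}] = [\pi_p]$.

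The main technical hurdle is producing a standard isomorphism of the factored form $\sigma \circ (\alpha\otimes \beta)\circ \sigma^{-1}$ and then appealing to the independence of $[\pi_p]$ from the choice of standard isomorphism; without that independence, one would be stuck with a generic $\theta_{\sigma(Q\otimes p)}$ and could not make the algebraic simplification $\alpha(Q) = 1$ that collapses $Q$ out of the formula. Everything else reduces to unpacking the definition of the cutdown and of a standard isomorphism at the level of representing sequences.
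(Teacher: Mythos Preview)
Your proof is correct and uses the same key device as the paper: the factored standard isomorphism $\sigma\circ(\theta_Q\otimes\theta_p)\circ\sigma^{-1}$ (which the paper invokes by citing Definition~3.3.2 of \cite{topdyn}). The only organizational difference is that the paper handles the first equality separately via Proposition~\ref{cutdownprop}(1) together with a unitary implementing $\sigma(1\otimes\cdot)$ on $W^*(\pi(N)\cup\{p\})$, whereas you treat both equalities uniformly by reducing to $[\sigma(1\otimes\pi_p)]=[\pi_p]$.
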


\begin{proof}
To show the first equality, by Proposition \ref{3.1.2} there is a unitary $u \in R^\mathcal{U}$ so that $\sigma(1 \otimes \pi)(x) = u x u^*$ for every $x \in W^*(\pi(N) \cup \left\{p\right\})$.  Then by Proposition \ref{cutdownprop}, we have \[[\pi_p] = [(\text{Ad}u \circ \pi)_{upu^*}] = [\sigma(1 \otimes \pi)_{\sigma(1 \otimes p)}].\]

For the second equality, take $\theta_{\sigma(Q' \otimes p)} = \sigma\circ(\theta_{Q'} \otimes \theta_p)\circ \sigma^{-1}$ for any projection $Q'$ (see Definition 3.3.2 in \cite{topdyn}).\qedhere

\end{proof}

\noindent Note that one can only scale down a priori.  The next proposition shows that the collection of cutdowns of $\pi$ is convex.

\begin{prop}\label{cutdownsconvex} 
Let $p,q \in \pi(N)'\cap R^\mathcal{U}$ be projections with $\tau(p) = \tau(q)$.
\begin{align*}
t[\pi_p] + (1-t)[\pi_q] &= [\sigma(1\otimes \pi)_{(\sigma(S \otimes p) + \sigma(S^\perp \otimes q))}]
\end{align*}
for any projection $S \in R^\mathcal{U}$ with $\tau(S) = t$.
\end{prop}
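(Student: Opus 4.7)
The plan is to unpack the right-hand side $[\sigma(1\otimes\pi)_P]$ with $P:=\sigma(S\otimes p)+\sigma(S^\perp\otimes q)$, decompose it by choosing two complementary subprojections of $P$ lying in the relative commutant, and then reduce each piece to a single cutdown via Proposition \ref{rescale}.

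Setting $\rho=\sigma(1\otimes\pi)$, $A=\sigma(S\otimes p)$, and $B=\sigma(S^\perp\otimes q)$, the first step is to check that $P=A+B$ is a projection in $\rho(N)'\cap R^\mathcal{U}$. Orthogonality is immediate from $AB=\sigma(SS^\perp\otimes pq)=0$, and since $p,q\in\pi(N)'\cap R^\mathcal{U}$ each summand commutes with $\rho(N)=\sigma(1\otimes\pi(N))$. A trace count gives $\tau(P)=t\tau(p)+(1-t)\tau(q)=\tau(p)=\tau(q)$.

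Next I would fix a standard isomorphism $\theta_P:PR^\mathcal{U}P\to R^\mathcal{U}$ and set $S':=\theta_P(A)$, so that $(S')^\perp=\theta_P(B)$ and $\tau(S')=\tau(A)/\tau(P)=t$. Because $A,B\in P(\rho(N)'\cap R^\mathcal{U})P$, the images $S',(S')^\perp$ land in $\rho_P(N)'\cap R^\mathcal{U}$. Applying Proposition \ref{cutdownprop}(3)(a) to $\rho_P$ and the projection $S'$ yields
\[
[\rho_P]=t[(\rho_P)_{S'}]+(1-t)[(\rho_P)_{(S')^\perp}],
\]
which leaves only the task of identifying these two cutdowns with $[\pi_p]$ and $[\pi_q]$ respectively.

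The main obstacle is this last identification, which amounts to showing that an iterated cutdown is itself a cutdown. Concretely, $(\rho_P)_{S'}(x)=\theta_{S'}(S'\,\theta_P(P\rho(x)))=(\theta_{S'}\circ\theta_P)(A\rho(x))$, where $\theta_{S'}\circ\theta_P$ restricts to a unital liftable $*$-isomorphism $AR^\mathcal{U}A\to R^\mathcal{U}$. Invoking Proposition \ref{3.1.2} on the separable subalgebra $W^*(A\rho(N))\subset AR^\mathcal{U}A$ shows this composition agrees with any standard isomorphism $\theta_A$ up to unitary conjugation, hence $[(\rho_P)_{S'}]=[\rho_A]$. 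Proposition \ref{rescale} now finishes the job: $[\rho_A]=[\sigma(1\otimes\pi)_{\sigma(S\otimes p)}]=[\pi_p]$, and the symmetric argument gives $[(\rho_P)_{(S')^\perp}]=[\pi_q]$, which combined with the displayed equation is exactly the claimed identity.
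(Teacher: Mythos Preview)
Your argument is correct and takes a different route from the paper's. The paper unpacks both sides to explicit representatives---writing the left side as $[\sigma(S\otimes\pi_p)+\sigma(S^\perp\otimes\pi_q)]$ and the right side via the definition of the cutdown---and then directly builds a single unitary $u'=v'+w'$ by applying Proposition~\ref{3.1.2} to a pair of liftable corner maps, one for the $S$-piece and one for the $S^\perp$-piece. You instead work at the level of equivalence classes: you split $[\rho_P]$ along the subprojection $S'=\theta_P(A)$ using Proposition~\ref{cutdownprop}(3)(a), reduce each summand to a single cutdown via the ``iterated cutdown $=$ cutdown'' observation, and finish with Proposition~\ref{rescale}. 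Your route is more modular and somewhat shorter, since it leverages the convex splitting already packaged in Proposition~\ref{cutdownprop} rather than re-deriving it by hand; the paper's route is more explicit in that it exhibits the conjugating unitary directly. One small point worth spelling out in a final write-up: the liftability of $\theta_{S'}\circ\theta_P|_{AR^{\mathcal U}A}$ that you need to invoke Proposition~\ref{3.1.2} relies on choosing compatible coordinatewise lifts $A_i\le P_i$ (so that $S'_i:=\theta_{P_i}(A_i)$ lifts $S'$ with the right trace); this is routine but deserves a sentence.
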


\begin{proof}
We have that \[t[\pi_p] + (1-t)[\pi_q] = [\sigma(S \otimes \pi_p) + \sigma(S^\perp \otimes \pi_q)]\] for any projection $S \in R^\mathcal{U}$ with $\tau(S) = t$.  So we must show \[[\sigma(1\otimes \pi)_{\sigma(S \otimes p) + \sigma(S^\perp \otimes q)}] = [\sigma(S \otimes \pi_p) + \sigma(S^\perp \otimes \pi_q)].\]

By definition, for any $x \in N$, \[\sigma(1\otimes \pi)_{\sigma(S \otimes p) + \sigma(S^\perp \otimes q)}(x) = \theta_{\sigma(S \otimes p) + \sigma(S^\perp \otimes q)}(\sigma(S \otimes p\pi(x)) + \sigma(S^\perp \otimes q\pi(x)))\] and 
\[\sigma(S\otimes \pi_p(x)) + \sigma(S^\perp \otimes \pi_q(x)) = \sigma(S\otimes \theta_p(p\pi(x))) + \sigma(S^\perp \otimes \theta_q(q\pi(x))).\]
Now note that $\tau(\theta_{\sigma(S \otimes p) + \sigma(S^\perp \otimes q)}(\sigma(S\otimes p)) = \tau(S)$.
Put \[p':= \theta_{\sigma(S \otimes p) + \sigma(S^\perp \otimes q)}(\sigma(S\otimes p))\] and consider $\psi: p'R^\mathcal{U}p' \rightarrow \sigma(S \otimes 1) R^\mathcal{U} \sigma(S \otimes 1)$ given by \[\psi = \sigma \circ (S\otimes \text{id}) \circ \theta_{\sigma(S\otimes p)} \circ \theta_{\sigma(S\otimes p) + \sigma(S^\perp \otimes q)}^{-1}\big|_{p'R^\mathcal{U}p'}.\]

Let $u \in R^\mathcal{U}$ be a unitary such that for every $x \in N,$ $\sigma(1 \otimes \theta_p(p\pi(x))) = u \theta_p(p\pi(x))u^*.$ So, for $x \in N$ we have
\[\psi(\theta_{\sigma(S\otimes p) + \sigma(S^\perp \otimes q)}(\sigma(S\otimes p\pi(x)))) = \sigma(S\otimes u)\sigma(S \otimes \theta_p(p\pi(x)))\sigma(S\otimes u^*). \]


Evidently, $\psi$ is a unital $*$-homomorphism that lifts to coordinate-wise homorphisms. Then by Proposition \ref{3.1.2} there is a partial isometry $v \in R^\mathcal{U}$ such that $v^*v = p', vv^* = \sigma(S\otimes 1)$, and $\psi(x) = vxv^*$ for every \[x \in \theta_{\sigma(S\otimes p) + \sigma(S^\perp \otimes q)}(\sigma(S\otimes p\pi(N))).\]  Therefore, for every $x \in N$,
\[v^*\sigma(S\otimes u)\sigma(S \otimes \theta_p(p\pi(x)))\sigma(S\otimes u^*)v = \theta_{\sigma(S\otimes p) + \sigma(S^\perp \otimes q)}(\sigma(S\otimes p\pi(x))).\]
Let $v' := v^*\sigma(S\otimes u)$.  Then $v'^*v' = \sigma(S\otimes 1)$ and $v'v'^* = \theta_{\sigma(S\otimes p) + \sigma(S^\perp \otimes q)}(S\otimes p)$.  Thus \[v'\sigma(S\otimes \theta_p(p\pi(x))) v'^* = \theta_{\sigma(S\otimes p) + \sigma(S^\perp \otimes q)}(\sigma(S\otimes p\pi(x)))\] for every $x \in N$.

Similarly, there is a partial isometry $w' \in R^\mathcal{U}$ with $w'^*w' = \sigma(S^\perp \otimes 1)$ and $w'w'^* = \theta_{\sigma(S\otimes p) + \sigma(S^\perp \otimes q)}(\sigma(S^\perp \otimes q))$ such that \[w'\sigma(S^\perp \otimes \theta_q(q\pi(x))) w'^* = \theta_{\sigma(S\otimes p) + \sigma(S^\perp \otimes q)}(\sigma(S^\perp \otimes q\pi(x)))\] for every $x \in N$.

Thus, if $u' = v' + w'$ then $u'$ is a unitary such that \[u'(\sigma(S\otimes \theta_p(p\pi(a))) + \sigma(S^\perp \otimes \theta_q(q\pi(a))))u'^*\]\[ = \theta_{\sigma(S\otimes p) + \sigma(S^\perp \otimes q)}(\sigma(S\otimes p\pi(a)) + \sigma(S^\perp \otimes q\pi(a))).\qedhere\]
\end{proof}

\noindent Note that thanks to Proposition \ref{rescale}, the requirement that $p$ and $q$ have matching traces in Proposition \ref{cutdownsconvex} is no obstruction.


\begin{prop}\label{minface1}
\[F_{[\pi]} = \left\{[\pi_p] : p \in \pi(N)' \cap R^\mathcal{U}, \text{ a nonzero projection}\right\}\]
\end{prop}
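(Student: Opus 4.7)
The plan is to prove both inclusions, where the harder direction is showing that the set $C := \{[\pi_p] : p \in \pi(N)' \cap R^\mathcal{U}, p \text{ nonzero projection}\}$ is itself a face containing $[\pi]$, so that minimality forces $F_{[\pi]} \subseteq C$.

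First, for the inclusion $C \subseteq F_{[\pi]}$: given any nonzero projection $p \in \pi(N)' \cap R^\mathcal{U}$, the case $p = 1$ is trivial since then $[\pi_p] = [\pi]$. Otherwise $p, p^\perp$ are both nonzero, so $\tau(p), \tau(p^\perp) \in (0,1)$ (the trace on $R^\mathcal{U}$ being faithful), and Proposition \ref{cutdownprop}(3)(a) writes $[\pi]$ as a proper convex combination $\tau(p)[\pi_p] + \tau(p^\perp)[\pi_{p^\perp}]$. Since $F_{[\pi]}$ is a face containing $[\pi]$, both $[\pi_p]$ and $[\pi_{p^\perp}]$ lie in $F_{[\pi]}$.

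Next, for $F_{[\pi]} \subseteq C$, I would verify that $C$ is a face containing $[\pi] = [\pi_1]$. To show convexity of $C$, start with $[\pi_p], [\pi_q] \in C$. Proposition \ref{rescale} lets us replace $p$ by $\sigma(Q \otimes p)$ and $q$ by $\sigma(Q' \otimes q)$ (inside the relative commutant of $\sigma(1 \otimes \pi)$) for projections $Q, Q'$ with $\tau(Q)\tau(p) = \tau(Q')\tau(q)$, so that the two cutdown projections have equal traces without changing the classes. Proposition \ref{cutdownsconvex} then writes $t[\pi_p] + (1-t)[\pi_q]$ as a cutdown of $\sigma(1\otimes \pi)$ by a single projection $P$ in $\sigma(1\otimes\pi)(N)' \cap R^\mathcal{U}$. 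Because $\pi$ and $\sigma(1 \otimes \pi)$ are unitarily equivalent via some $u$, Proposition \ref{cutdownprop}(1) transfers this into $[\pi_{u^*Pu}] \in C$, giving convexity.

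For the face property, suppose $[\pi_p] = t[\rho_1] + (1-t)[\rho_2]$ with $0 < t < 1$. Proposition \ref{cutdownprop}(3)(b) produces a projection $q \in \pi_p(N)' \cap R^\mathcal{U}$ of trace $t$ with $[\rho_1] = [(\pi_p)_q]$ and $[\rho_2] = [(\pi_p)_{q^\perp}]$. Under the isomorphism $\theta_p$, the relative commutant $\pi_p(N)' \cap R^\mathcal{U}$ corresponds to $p(\pi(N)' \cap R^\mathcal{U})p$, so $q = \theta_p(p')$ for some nonzero projection $p' \leq p$ in $\pi(N)' \cap R^\mathcal{U}$. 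Unwinding definitions, $(\pi_p)_q(x) = \theta_q(\theta_p(p'\pi(x)))$, and since $\theta_q \circ \theta_p|_{p'R^\mathcal{U}p'}$ is a liftable isomorphism $p'R^\mathcal{U}p' \to R^\mathcal{U}$, Proposition \ref{3.1.2} compares it to the standard isomorphism $\theta_{p'}$ by unitary conjugation, yielding $[(\pi_p)_q] = [\pi_{p'}] \in C$; the argument for $[\rho_2]$ is symmetric with $q^\perp$ and $p - p'$.

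The main obstacle is the convexity step, since it requires delicately combining the rescaling of Proposition \ref{rescale} with Proposition \ref{cutdownsconvex} and then translating from $\sigma(1\otimes \pi)$-cutdowns back to $\pi$-cutdowns; the face step is bookkeeping with cutdowns of cutdowns, made straightforward by the identification of $\pi_p(N)' \cap R^\mathcal{U}$ with $\theta_p(p(\pi(N)'\cap R^\mathcal{U})p)$.
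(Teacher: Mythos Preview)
Your proof is correct and follows exactly the route the paper has in mind: the paper's own proof is the single sentence that this ``follows quickly from Proposition~\ref{cutdownprop}, Proposition~\ref{cutdownsconvex}, and the minimality of $F_{[\pi]}$,'' and you have supplied precisely those details (together with the rescaling from Proposition~\ref{rescale}, which the paper notes right after Proposition~\ref{cutdownsconvex} removes the equal-trace hypothesis). One small quibble in your convexity step: applying Proposition~\ref{cutdownsconvex} with $\sigma(1\otimes\pi)$ in the role of $\pi$ literally produces a cutdown of $\sigma(1\otimes\sigma(1\otimes\pi))$ rather than of $\sigma(1\otimes\pi)$, but since that map is again unitarily equivalent to $\pi$, your transfer via Proposition~\ref{cutdownprop}(1) still goes through unchanged.
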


\begin{proof}
%
This follows quickly from Proposition \ref{cutdownprop}, Proposition \ref{cutdownsconvex}, and the minimality of $F_{[\pi]}$.
\end{proof}

\begin{prop}\label{tensorcenter}
Let $\mathcal{Z}(\pi(N)'\cap R^\mathcal{U})$ be separable. If $z$ is a minimal central projection in $\mathcal{Z}(\pi(N)'\cap R^\mathcal{U})$ then $\sigma(1\otimes z)$ is minimal in $\mathcal{Z}(\sigma(1\otimes \pi)(N)'\cap R^\mathcal{U})$.	
\end{prop}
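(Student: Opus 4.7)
The plan is to produce, via Proposition \ref{3.1.2}, a single unitary $u\in R^\mathcal{U}$ that simultaneously implements $y\mapsto\sigma(1\otimes y)$ on the separable subalgebra generated by $\pi(N)\cup\{z\}$, and then note that conjugation by $u$ carries minimal central projections of $\pi(N)'\cap R^\mathcal{U}$ to minimal central projections of $\sigma(1\otimes\pi)(N)'\cap R^\mathcal{U}$.

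First I would consider the unital $*$-endomorphism $\chi:R^\mathcal{U}\to R^\mathcal{U}$ defined by $\chi(y)=\sigma(1\otimes y)$. Because $\sigma:(R\otimes R)^\mathcal{U}\to R^\mathcal{U}$ is the induced isomorphism on ultrapowers, $\chi$ lifts coordinatewise via $\chi_i:R\to R$, $\chi_i(y)=\sigma(1\otimes y)$. Next, since $N$ is separable and $z$ is a single projection, the algebra $M:=W^*(\pi(N)\cup\{z\})$ is a separable von Neumann subalgebra of $R^\mathcal{U}$. Applying Proposition \ref{3.1.2} with $p=q=1$ therefore yields a unitary $u\in R^\mathcal{U}$ satisfying $\chi(y)=uyu^*$ for every $y\in M$; in particular $u\pi(x)u^*=\sigma(1\otimes\pi)(x)$ for every $x\in N$, and $uzu^*=\sigma(1\otimes z)$.

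To finish, I would observe that $\mathrm{Ad}(u)$ is an automorphism of $R^\mathcal{U}$ that sends $\pi(N)$ onto $\sigma(1\otimes\pi)(N)$, and so restricts to an isomorphism of relative commutants
\[
\mathrm{Ad}(u):\pi(N)'\cap R^\mathcal{U}\ \xrightarrow{\ \cong\ }\ \sigma(1\otimes\pi)(N)'\cap R^\mathcal{U},
\]
which in turn carries centers isomorphically and preserves minimality of central projections. Since $z$ is minimal in $\mathcal{Z}(\pi(N)'\cap R^\mathcal{U})$ by hypothesis, $\sigma(1\otimes z)=uzu^*$ is minimal in $\mathcal{Z}(\sigma(1\otimes\pi)(N)'\cap R^\mathcal{U})$.

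I do not expect any serious obstacle in this argument; the only delicate point is verifying the hypotheses of Proposition \ref{3.1.2}, namely the coordinatewise liftability of $\chi$ (immediate from how $\sigma$ is extended to the ultrapower) and the separability of $M$ (immediate from separability of $N$ together with adjoining a single element). The separability assumption on $\mathcal{Z}(\pi(N)'\cap R^\mathcal{U})$ in the statement is used only to ensure that minimal central projections are available, not in the argument itself.
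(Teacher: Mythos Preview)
Your argument is correct and is exactly the approach the paper intends: its proof reads simply ``This follows immediately using Proposition \ref{3.1.2}.'' One small correction to your closing parenthetical: separability of an abelian von Neumann algebra does not by itself guarantee the existence of minimal projections (e.g., $L^\infty[0,1]$), so that is not the role of the hypothesis---though this does not affect the validity of your proof.
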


\begin{proof}
%
This follows immediately using Proposition \ref{3.1.2}.
\end{proof}


\begin{lem}\label{minface2}
Let $\mathcal{Z}(\pi(N)'\cap R^\mathcal{U})$ be finite dimensional with minimal central projections $z_1,\dots,z_n$, and let $0<t_0 \leq \min\left\{\tau(z_1),\dots,\tau(z_n)\right\}$.  Then \[F_{[\pi]} = \left\{[\pi_p] : p \in \pi(N)' \cap R^\mathcal{U}, \text{ a projection }, \tau(p) = t_0 \right\}.\]
\end{lem}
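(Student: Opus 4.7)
Proof plan for Lemma \ref{minface2}:

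The inclusion ($\supseteq$) is immediate from Proposition \ref{minface1}, so the task reduces to showing that every nonzero projection $p \in \pi(N)'\cap R^\mathcal{U}$ admits a partner $p'$ of trace $t_0$ with $[\pi_{p'}] = [\pi_p]$.

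The plan is to first establish the decomposition formula
\[
[\pi_p] \;=\; \sum_{i=1}^{n} \frac{\tau(p z_i)}{\tau(p)}\,[\pi_{z_i}],
\]
which writes every cutdown class as a convex combination of the extreme cutdowns $[\pi_{z_i}]$ (each extreme by Theorem \ref{brownchar}, since $z_i(\pi(N)'\cap R^\mathcal{U})z_i$ is a factor). To derive it, observe that $\pi_p(N)'\cap R^\mathcal{U} = \theta_p\bigl(p(\pi(N)'\cap R^\mathcal{U})p\bigr)$ has minimal central projections $\theta_p(pz_i)$ with traces $\tau(pz_i)/\tau(p)$. Iterating Proposition \ref{cutdownprop}(3a) on $\pi_p$ peels off one central component at a time, yielding $[\pi_p] = \sum_i (\tau(pz_i)/\tau(p))[(\pi_p)_{\theta_p(pz_i)}]$, and an unwinding of the standard isomorphisms (unique up to inner conjugation by Proposition \ref{3.1.2}) identifies each $(\pi_p)_{\theta_p(pz_i)}$ with $\pi_{pz_i}$.

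The substance of the formula is then the sublemma: whenever $q$ is a nonzero subprojection of $z_i$ in $\pi(N)'\cap R^\mathcal{U}$, we have $[\pi_q] = [\pi_{z_i}]$. To prove it, apply Proposition \ref{rescale} to write
\[
[\pi_q] = [\sigma(1\otimes\pi)_{\sigma(1\otimes q)}] \quad\text{and}\quad [\pi_{z_i}] = [\sigma(1\otimes\pi)_{\sigma(Q\otimes z_i)}]
\]
for a projection $Q \in R^\mathcal{U}$ with $\tau(Q) = \tau(q)/\tau(z_i)$, so the two cutdown projections share the common trace $\tau(q)$. By Proposition \ref{tensorcenter}, $\sigma(1\otimes z_i)$ is a minimal central projection of $\sigma(1\otimes\pi)(N)'\cap R^\mathcal{U}$; both $\sigma(1\otimes q)$ and $\sigma(Q\otimes z_i)$ sit beneath it, inside a factor corner, hence are Murray--von Neumann equivalent, and Proposition \ref{cutdownprop}(4) delivers the desired equality.

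Granted the decomposition formula, construct $p'$ by prescribing its profile. Set $\beta_i := t_0\,\tau(p z_i)/\tau(p)$. The hypothesis $t_0 \leq \min_j \tau(z_j)$ combined with $\tau(pz_i) \leq \tau(p)$ forces $\beta_i \leq \tau(z_i)$, so inside each factor $z_i(\pi(N)'\cap R^\mathcal{U})z_i$ one can pick a subprojection $p_i' \leq z_i$ of trace exactly $\beta_i$. Setting $p' := \sum_i p_i'$ produces a projection in $\pi(N)'\cap R^\mathcal{U}$ with $\tau(p') = t_0$, and applying the decomposition formula to $p'$ gives
\[
[\pi_{p'}] \;=\; \sum_i \frac{\beta_i}{t_0}\,[\pi_{z_i}] \;=\; \sum_i \frac{\tau(p z_i)}{\tau(p)}\,[\pi_{z_i}] \;=\; [\pi_p].
\]
The main obstacle I anticipate is the bookkeeping around standard isomorphisms needed to make the identification $(\pi_p)_{\theta_p(pz_i)} = \pi_{pz_i}$ (up to unitary equivalence) precise and to rigorously iterate Proposition \ref{cutdownprop}(3a) across the central components.
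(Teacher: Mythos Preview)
Your argument is correct but takes a different route from the paper. The paper proceeds by a direct two-case split on whether $t':=\tau(q)$ exceeds $t_0$: when $t' > t_0$, it scales $q$ down via $\sigma(Q\otimes q)$ with $\tau(Q) = t_0/t'$ and pulls the result back along a unitary implementing $\sigma(1\otimes\,\cdot\,)$ on $W^*(\pi(N)\cup\mathcal Z(\pi(N)'\cap R^{\mathcal U}))$; when $t' < t_0$, it builds $p$ with $\tau(pz_i) = (t_0/t')\tau(qz_i)$ and checks that $\sigma(Q\otimes p)$ and $\sigma(1\otimes q)$ (for $\tau(Q)=t'/t_0$) have matching traces against each $\sigma(1\otimes z_i)$, hence are Murray--von Neumann equivalent in $\sigma(1\otimes\pi)(N)'\cap R^{\mathcal U}$, and concludes via Propositions~\ref{cutdownprop}(4) and~\ref{rescale}. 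Your approach replaces this case analysis with the single decomposition formula $[\pi_p] = \sum_i (\tau(pz_i)/\tau(p))[\pi_{z_i}]$, reducing everything to the sublemma $[\pi_q] = [\pi_{z_i}]$ for $0\ne q\le z_i$, which you prove by exactly the Murray--von Neumann argument the paper uses in its second case. Your route is more conceptual and in effect front-loads the affine isomorphism with the simplex that the paper establishes immediately afterward; the paper's route is more hands-on and sidesteps the transitivity-of-cutdowns bookkeeping $[(\pi_p)_{\theta_p(pz_i)}]=[\pi_{pz_i}]$ that you flagged (though, as you anticipated, that identity follows from one appeal to Proposition~\ref{3.1.2}, since a composition of liftable isomorphisms is liftable).
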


\begin{proof}
 By Proposition \ref{minface1} it suffices to show that for any projection $q \in \pi(N)'\cap R^\mathcal{U}$, there is a projection $p \in \pi(N)'\cap R^\mathcal{U}$ with trace $t_0$ such that $[\pi_q] = [\pi_p]$. Let $q \in \pi(N)' \cap R^\mathcal{U}$ be a projection and let $t' = \tau(q)$. Put
\[A_{t_0} = \left\{[\pi_p] : p \in \pi(N)' \cap R^\mathcal{U}, \text{ a projection }, \tau(p) = t_0 \right\}.\]
Assume that $t' > t_0$.  Let $Q \in R^\mathcal{U}$ be a projection with $\ds \tau(Q) = \frac{t_0}{t'}$, and let $u \in R^\mathcal{U}$ be a unitary such that $\sigma(1\otimes x) = uxu^*$ for every $x \in W^*(\pi(N) \cup \mathcal{Z}(\pi(N)'\cap R^\mathcal{U}))$, then by Proposition \ref{rescale},  \[[\pi_q] = [\sigma(1 \otimes \pi)_{\sigma(Q \otimes q)}] = [\pi_{u^*\sigma(Q \otimes q)u}] \in A_{t_0}.\]

Now let $t' < t_0$.  Let $p \in \pi(N)'\cap R^\mathcal{U}$ be a projection such that $\ds \tau(pz_i) = \frac{t_0}{t'}\tau(qz_i)$ for every $1 \leq i \leq n$.  Let $Q \in R^\mathcal{U}$ be a projection such that $\ds \tau(Q) = \frac{t'}{t_0}$.  By Proposition \ref{tensorcenter}, the minimal central projections in $\sigma(1 \otimes \pi)(N)'\cap R^\mathcal{U}$ are $\left\{\sigma(1 \otimes z_i)\right\}_{i=1}^n$.    Observe that for every $1 \leq i \leq n$ we have 
\[\tau(\sigma(Q \otimes p)\sigma(1\otimes z_i)) = \tau(\sigma(1\otimes q)\sigma(1\otimes z_i)).\]
Thus $\sigma(Q\otimes p)$ is Murray-von Neumann equivalent to $\sigma(1 \otimes q)$ in $\sigma(1\otimes \pi)(N)' \cap R^\mathcal{U}$. By Propositions \ref{cutdownprop} and \ref{rescale} we get that 
\[ [\pi_q] = [\sigma(1\otimes \pi)_{\sigma(1\otimes q)}]=[\sigma(1\otimes \pi)_{\sigma(Q \otimes p)}]= [\sigma(1\otimes \pi)_{\sigma(1\otimes p)}]= [\pi_p] \in A_{t_0}.\qedhere\]
\end{proof}


\begin{proof}(of part \eqref{sim} of Theorem \ref{simplexthm}) 
We will show that if $\mathcal{Z}(\pi(N)' \cap R^\mathcal{U})$ is $n$-dimensional with $n < \infty$ then $F_{[\pi]}$ is affinely isomorphic to the $n$-vertex simplex given by \[ \Delta_{n-1}^{t_0} := \left\{(x_1,\dots,x_n): 0\leq x_i \leq t_0  \quad \forall 1 \leq i \leq n, \sum_{i=1}^n x_i = t_0\right\}.\]  By Lemma \ref{minface2}, we may identify $F_{[\pi]}$ with \[A_{t_0} := \left\{[\pi_p] : p \in \pi(N)' \cap R^\mathcal{U}, \text{ a projection }, \tau(p) = t_0 \right\}.\]
Consider the map $\psi: A_{t_0} \rightarrow \Delta_{n-1}^{t_0}$ given by \[\psi([\pi_p]) = (\tau(pz_1), \dots, \tau(pz_n))\] where $z_1,\dots z_n$ are the minimal central projections of $\pi(N)' \cap R^\mathcal{U}$.  Proposition \ref{cutdownprop} ensures that $\psi$ is well-defined and injective.  Given any $(x_1,\dots, x_n) \in \Delta_{n-1}^{t_0}$, it is well-known that there is a projection $p \in \pi(N)'\cap R^\mathcal{U}$ such that $(\tau(pz_1),\linebreak \dots,\tau(pz_n)) = (x_1,\dots, x_n);$ thus, $\psi$ is surjective.  It remains to show that $\psi$ is affine.  Let $[\pi_p], [\pi_q] \in A_{t_0}$. Since $W^*(\pi(N) \cup \mathcal{Z}(\pi(N)'\cap R^\mathcal{U}))$ is separable, there is a unitary $u \in R^\mathcal{U}$ so that $\sigma(1 \otimes x) = uxu^*$ for every $x \in W^*(\pi(N) \cup \mathcal{Z}(\pi(N)'\cap R^\mathcal{U}))$.  Now, by Proposition \ref{cutdownsconvex}, \[t[\pi_p] + (1-t)[\pi_q] = [\sigma(1\otimes \pi)_{\sigma(S \otimes p) + \sigma(S^\perp \otimes q)}] = [\pi_{u^*(\sigma(S \otimes p) + \sigma(S^\perp \otimes q))u}]\] where $S \in R^\mathcal{U}$ is a projection such that $\tau(S)=t$.  Furthermore, for every $1 \leq i \leq n$, we have that 
\[\tau(u^*(\sigma(S \otimes p) + \sigma(S^\perp \otimes q))uz_i) =  t\tau(pz_i) + (1-t)\tau(qz_i).\]
So
\begin{align*}
\psi(t[\pi_p] + (1-t)[\pi_q]) &= \psi([\pi_{u^*(\sigma(S \otimes p) + \sigma(S^\perp \otimes q))u}]\\
&= t\psi([\pi_p]) + (1-t)\psi([\pi_q]).\qedhere
\end{align*}
\end{proof}

\section{Schur's Lemma for $\HOM(N,R^\mathcal{U})$}

By presenting a dichotomy for intertwiners of irreducibles, Theorem \ref{hardlemma} can be considered as a sort of $R^\mathcal{U}$-version of Schur's lemma.  


\begin{repthm}{hardlemma}
Let $[\pi],[\rho] \in \HOM(N,R^\mathcal{U})$ be extreme points.  If there is a nonzero intertwiner $x \in R^\mathcal{U}$ such that $\pi(a)x=x\rho(a)$ for every $a \in N$, then $[\pi]=[\rho]$.
\end{repthm}

\begin{proof}
Let $x = v|x|$ be the polar decomposition of $x$ (here $|x| = (x^*x)^\frac{1}{2}$).  It is a direct exercise to show that $v$ also intertwines $\pi$ and $\rho$.  Let $p = vv^* \in \pi(N)'\cap R^\mathcal{U}$ and $q = v^*v \in \rho(N)' \cap R^\mathcal{U}$.  Consider the liftable unital $*$-homomorphism \[\chi:= \theta_p \circ \text{Ad}(v) \circ \theta_q^{-1}: R^\mathcal{U} \rightarrow R^\mathcal{U}.\]  By Proposition \ref{3.1.2} there is a unitary $u \in R^\mathcal{U}$ such that $\chi(x) = uxu^*$ for every $x \in \rho_q(N)$.  Then we have for every $a \in N, \pi_p(a) = u\rho_q(a)u^*$.  Thus \[[\pi] = [\pi_p] = [\rho_q] = [\rho].\qedhere\]

\end{proof}

\begin{rmk}\label{generalschur}
Thanks to a proof suggested by S. White, Theorem \ref{hardlemma} can be stated in the following more general terms: Let $N$ be a unital $C^*$-algebra, let $M$ be a II$_1$-factor, and let $\pi,\rho: N \rightarrow M$ be unital $*$-homomorphisms so that $\pi(N)'\cap M$ and $\rho(N)'\cap M$ are both diffuse factors.  If there is a nonzero intertwiner $x \in M$ such that $\pi(a)x = x \rho(a)$ for every $a \in N$, then $\pi$ and $\rho$ are unitarily equivalent.  
\end{rmk}

\noindent Next we record the following easy corollary.  This is essentially a scaled version of Theorem \ref{hardlemma}.

\begin{cor}\label{scaletwine}
Let $p,q \in R^\mathcal{U}$ be mutually orthogonal projections with $\tau(p)=\tau(q)$.  Let $[\pi],[\rho]\in \HOM(N,R^\mathcal{U})$ be distinct extreme points.  If $x \in (p+q)R^\mathcal{U}(p+q)$ intertwines $\theta_p^{-1}\circ \pi$ and $\theta_q^{-1}\circ \rho$, then $x=0$.
\end{cor}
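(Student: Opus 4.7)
The plan is to reduce the statement to Theorem \ref{hardlemma}. First, decompose $x = pxp + pxq + qxp + qxq$ according to the block structure of $(p+q)R^\mathcal{U}(p+q)$. Plugging $a = 1$ into the intertwining relation $\theta_p^{-1}\pi(a)\,x = x\,\theta_q^{-1}\rho(a)$ gives $px = xq$, which forces $pxp = 0$ and $qxq = 0$; the general relation then makes $pxq$ a genuine intertwiner in $pR^\mathcal{U}q$ between $\theta_p^{-1}\pi$ and $\theta_q^{-1}\rho$, and the $qxp$ component is handled by the symmetric argument applied to $x^*$. So it suffices to show that any $y \in pR^\mathcal{U}q$ satisfying $\theta_p^{-1}\pi(a)\,y = y\,\theta_q^{-1}\rho(a)$ for every $a \in N$ must vanish.

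Assume toward contradiction that $y \neq 0$. Choose a partial isometry $w \in R^\mathcal{U}$ with $w^*w = q$ and $ww^* = p$ that lifts to a sequence of partial isometries in $R$; this is possible because $\tau(p) = \tau(q)$ and both $p,q$ come from projections in $R$ of the same trace. The resulting map $\Phi := \theta_p \circ \mathrm{Ad}(w) \circ \theta_q^{-1} : R^\mathcal{U} \to R^\mathcal{U}$ is a unital $*$-automorphism that lifts coordinatewise. By Proposition \ref{3.1.2}, applied to the separable subalgebra $\rho(N) \subset R^\mathcal{U}$, there is a unitary $u \in R^\mathcal{U}$ with $\Phi(\rho(a)) = u\rho(a)u^*$ for every $a \in N$.

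Put $z := \theta_p(yw^*) \in R^\mathcal{U}$. Since $(yw^*)w = yq = y \neq 0$, we have $yw^* \neq 0$ and hence $z \neq 0$. A direct computation using the intertwining of $y$, the identity $w^*w = q$, and that $\theta_p$ is a homomorphism gives
\[
\pi(a)\,z = \theta_p(\theta_p^{-1}\pi(a) \cdot yw^*) = \theta_p(y\,\theta_q^{-1}\rho(a)\,w^*) = \theta_p(yw^* \cdot w\,\theta_q^{-1}\rho(a)\,w^*) = z\,\Phi(\rho(a)) = z\,u\rho(a)u^*.
\]
Right-multiplying by $u$ yields $\pi(a)(zu) = (zu)\rho(a)$, so $zu$ is a nonzero intertwiner of $\pi$ and $\rho$ in $R^\mathcal{U}$. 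Theorem \ref{hardlemma} then forces $[\pi] = [\rho]$, contradicting distinctness. Hence $y = 0$, and together with the analogous argument for $qxp$ we conclude $x = 0$.

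The main obstacle is essentially bookkeeping: choosing the liftings of $p,q,w$ and the standard isomorphisms $\theta_p, \theta_q$ in a compatible way so that $\Phi$ genuinely lifts to coordinatewise homomorphisms, as required by Proposition \ref{3.1.2}. Once this is arranged, the chain of equalities above reduces the corollary to Theorem \ref{hardlemma} in a single step, justifying the author's remark that this is an \emph{easy} scaled version of that theorem.
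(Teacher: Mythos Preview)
Your core argument---transporting a corner intertwiner $y\in pR^{\mathcal U}q$ back to $R^{\mathcal U}$ via $z=\theta_p(yw^*)$, invoking Proposition~\ref{3.1.2} to realise $\Phi$ as $\mathrm{Ad}(u)$, and thereby producing a nonzero intertwiner $zu$ of $\pi$ and $\rho$---is exactly the ``scaled version of Theorem~\ref{hardlemma}'' the paper has in mind. The paper omits the proof entirely, and your write-up of this step is correct and fills the gap cleanly.

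There is, however, a genuine slip in your reduction step. Taking adjoints turns the relation $\theta_p^{-1}\pi(a)\,x = x\,\theta_q^{-1}\rho(a)$ into $\theta_q^{-1}\rho(b)\,x^* = x^*\,\theta_p^{-1}\pi(b)$, so the off-diagonal block of $x^*$ that your argument controls is $qx^*p=(pxq)^*$, \emph{not} $(qxp)^*$. In fact the component $qxp$ is completely unconstrained by the intertwining relation: for any nonzero $y_0\in qR^{\mathcal U}p$ one has $\theta_p^{-1}\pi(a)\,y_0=0=y_0\,\theta_q^{-1}\rho(a)$, so $y_0$ trivially intertwines. Thus the corollary, read literally with $x\in(p+q)R^{\mathcal U}(p+q)$, cannot hold. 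The intended reading---and the only way the corollary is actually applied in the proof of part~\eqref{iso} of Theorem~\ref{simplexthm}, where the relevant elements $(vp_iv^*)x(wq_jw^*)$ already sit in the correct corner---is for $x\in pR^{\mathcal U}q$. With that hypothesis your proof is complete; the defect is in taking the stated hypothesis at face value rather than in your main idea.
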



\section{Proofs of Parts \eqref{iso} and \eqref{equation} of Theorem \ref{simplexthm}}


\begin{proof}(of part \eqref{iso} of Theorem \ref{simplexthm})

We prove this part of the theorem in the case where $n=2$.  All other cases are direct generalizations of this one.  We must show that if $[\pi],[\rho] \in \HOM(N,R^\mathcal{U})$ are distinct extreme points and if $\varphi \in t[\pi] + (1-t)[\rho]$ for $0<t<1$, then \[\varphi(N)'\cap R^\mathcal{U} \cong \pi(N)'\cap R^\mathcal{U} \oplus \rho(N)'\cap R^\mathcal{U}.\]

Fix $K \in \mathbb{N}$ such that $\ds \frac{1}{K} < t$, and let $1 \leq k \leq K-1$ be such that $\ds \frac{k}{K} \leq t < \frac{k+1}{K}$.  Let $p \in R^\mathcal{U}$ be a projection with $\tau(p) = t$ and let \[\varphi = \theta_p^{-1} \circ \pi + \theta_{p^\perp}^{-1} \circ \rho.\]  Let $p_1,\dots,p_k, \tilde{p} \leq p$ be mutually orthogonal projections such that $\ds \tau(p_i) = \frac{1}{K}$ for $1 \leq i \leq k$ and $\ds\tau(\tilde{p}) = t - \frac{k}{K} (< \frac{1}{K})$; and let $v$ be a partial isometry (provided by Proposition \ref{3.1.2}) with $v^*v = vv^* = p$ such that \[\theta_p^{-1}\circ \pi = \text{Ad}(v) \circ \Big(\sum_{i=1}^k \theta_{p_i}^{-1} \circ \pi + \theta_{\tilde{p}}^{-1} \circ \pi\Big).\] Similarly, let $q_1,\dots,q_{K-k-1},\tilde{q} \leq p^\perp$ be mutually orthogonal projections such that $\ds \tau(q_j) = \frac{1}{K}$ for every $1 \leq j \leq K-k-1$ and $\ds \tau(\tilde{q}) = \frac{k+1}{K} - t$; and let $w$ be a partial isometry with $w^*w = ww^* = p^\perp$ such that \[\theta_{p^\perp}^{-1} \circ \rho = \text{Ad}(w) \circ \Big(\sum_{j=1}^{K-k-1} \theta_{q_j}^{-1} \circ \rho + \theta_{\tilde{q}}^{-1} \circ \rho\Big).\]  Fix $x \in \varphi(N)'\cap R^\mathcal{U}$ with $||x||\leq 1$.  It will suffice to show that $x = pxp + p^\perp xp^\perp$. It is a direct observation that for every $1 \leq i \leq k$ and $1\leq j \leq K-k-1$, $(vp_iv^*)x(wq_jw^*)$ intertwines $v(\theta_{p_i}^{-1} \circ \pi)v^*$ and $w(\theta_{q_j}^{-1} \circ \rho)w^*$.  Then by Corollary \ref{scaletwine} we have
\[(v(p-\tilde{p})v^*)x(w(p^\perp-\tilde{q})w^*) = \sum_{i=1}^k\sum_{j=1}^{K-k-1} (vp_iv^*)x(wq_jw^*)  = 0.\]  Similarly,
\[(w(p^\perp-\tilde{q})w^*)x(v(p-\tilde{p})v^*) = \sum_{i=1}^k\sum_{j=1}^{K-k-1} (wq_jw^*)x(vp_iv^*) = 0.\]
So 
\begin{align*}
pxp^\perp &= px(w\tilde{q}w^*) + (v\tilde{p}v^*)x(p^\perp - w\tilde{q}w^*)\\
&\text{and}\\
p^\perp x p &= (w\tilde{q}w^*)xp + (p^\perp - w\tilde{q}w^*)x(v\tilde{p}v^*).
\end{align*}
Thus,
\begin{align*}
||pxp^\perp + p^\perp x p||_2 &\leq ||px(w\tilde{q}w^*)||_2 + ||(v\tilde{p}v^*)x(p^\perp - w\tilde{q}w^*)||_2 + ||(w\tilde{q}w^*)xp||_2 \\
&+ ||(p^\perp - w\tilde{q}w^*)x(v\tilde{p}v^*)||_2\\
&\leq ||w\tilde{q}w^*||_2 + ||v\tilde{p}v^*||_2 + ||w \tilde{q}w^*||_2 + ||v\tilde{p}v^*||_2\\
&< 4\sqrt{\frac{1}{K}}.
\end{align*}
Since $K \in \mathbb{N}$ was only selected to be bounded below, this shows that $pxp^\perp + p^\perp x p = 0$.  Thus, $x = pxp + p^\perp x p^\perp$.
\end{proof}

\begin{proof}(of part \eqref{equation} of Theorem \ref{simplexthm})
This statement follows from \eqref{sim} and \eqref{iso}.  Let $\dim(\mathcal{Z}(\pi(N)'\cap R^\mathcal{U})) = n$.  If $n < \infty$, then by \eqref{sim} we have that $F_{[\pi]}$ is an $n$-vertex simplex and thus $\dim(F_{[\pi]}) = n-1$. If $n = \infty$ but $\dim(F_{[\pi]}) < \infty$, then $[\pi]$ is an average of finitely many extreme points.  And this would imply by \eqref{iso} that $\dim(\mathcal{Z}(\pi(N)'\cap R^\mathcal{U}))$ is finite--a contradiction.  So we must have $\dim(F_{[\pi]}) = \infty$.  
\end{proof}


\bibliographystyle{plain}
\bibliography{thesisbib}{}

\end{document}